    \theoremstyle{plain}
   \newtheorem{thm}{Theorem}
     \newtheorem{pro}[thm]{Proposition}
   \newtheorem{cor}[thm]{Corollary}
    \newtheorem*{mas}{Quintessence}
   \theoremstyle{definition}
   \newtheorem{exa}[thm]{{\it Example}}
   \theoremstyle{remark}
   \newtheorem{rem}[thm]{{\it Remark}}
\DeclareMathOperator{\okr}{{\stackrel{{\scriptscriptstyle{\mathsf{def}}}}{=}}}
 \DeclareMathOperator{\D}{d } 
\DeclareMathOperator{\lin}{lin}
\DeclareMathOperator{\diag}{{\rm diag}}
\def\funk#1#2#3{#1\colon#2\to#3}
\def\funkc#1#2#3#4#5{#1\colon#2\ni#3\mapsto#4\in#5}
\def\Ge{\geqslant}
\def\is#1#2{\langle#1,#2\rangle}
\def\isb#1#2{\big\langle#1,#2\big\rangle}
\def\Le{\leqslant}
\def\sbar#1{\,\overline{ #1}}
\def\wynik{$\implies$}
\def\zb#1#2{\{{#1}\colon\ {#2}\}}
\newcommand*\ddc{\mathcal D}
\newcommand*\hhc{\mathcal H}
\newcommand*\kkc{\mathcal K}
\newcommand*\ccb{\mathbb C}
\newcommand*\nnb{\mathbb N}
\newcommand*\rrb{\mathbb R}
\newcommand*\iif{\mathfrak I}
\newcommand*\aasm{\boldsymbol a}
\newcommand*\bbsm{\boldsymbol b}
\newcommand*\eesm{\boldsymbol e}
\newcommand*\sssm{\boldsymbol s}
\newcommand*\ttsm{\boldsymbol t}
\newcommand{\matp}[1]{\begin{pmatrix}#1\end{pmatrix}}
\newcommand{\sbs}{\subseteq}
\newcommand{\dts}{,\dots,}
\newcommand{\CS}{\ccb[\nnb]}
\newcommand{\CSS}{\ccb[\nnb\times\nnb]}
\newcounter{dc}
\begin{document}
\ifx\JPicScale\undefined\def\JPicScale{1}\fi \unitlength\JPicScale mm

    \noindent\small
     \vspace{2.0cm}

   \title[The Sobolev moment problem  ]{The Sobolev moment problem  and Jordan dilations }
   \author[F.H. Szafraniec and M. Wojtylak]{Franciszek Hugon Szafraniec and Micha{\l} Wojtylak}
   \address{Instytut Matematyki, Uniwersytet Jagiello\'nski,
ul. {\L}ojasiewicza 6, 30348 Krak\'ow, Poland}
   \email{umszafra@cyf-kr.edu.pl}
    \address{Instytut Matematyki, Uniwersytet Jagiello\'nski,
ul. {\L}ojasiewicza 6, 30348 Krak\'ow, Poland}
   \email{michal.wojtylak@im.uj.edu.pl}

   \thanks{Supported by the National Science Center, decision No.
DEC-2013/11/B/ST1/03613.}

\thanks{Preliminaries of this research were reported by  FHSz at {\em The 7th Conference on Function Spaces, May 13-17, 2014, Southern Illinois University, Edwardsville, Illinois} and {\em International Workshop on { Operator Theory and Applications, IWOTA 2015}, July 6-10, 2015, Tbilisi Mathematical Union, Georgian National Academy of Sciences, Ivane Javakhishvili Tbilisi State University, Tbilisi, Georgia.}
}
   \begin{abstract} Moment problems and orthogonal polynomials, both meant in a single real variable, belong to the oldest problems in Classical Analysis. They have been developing for over a century in  two parallel, mostly independent streams. During the last 20 years a rapid advancement  of polynomials orthogonal in Sobolev space has been noticed, see \cite{paco3} for an updated survey; their moment counterpart seems to be not paid enough attention as it deserves.  In this paper we intend to resume the theme of \cite{paco1}  and also \cite{paco2}, and open the door for further, deeper study of the problem.
   \end{abstract}
   \maketitle




  \subsection{Opening}\label{s1}   {Solving a moment problem means, roughly speaking, to find a  spectral representation of a given data. This is usually understood as a typical inverse spectral problem, also because of the traditional connection between moments and operators; the latter are the object of special concern in the present paper. }

   {For example,}
  given a sequence $(a(n))_{n=0}^{\infty}$ of numbers, the classical (named after Hamburger) moment problem asks under which conditions there is a nonnegative Borel measure $\mu$ on $\rrb$ such that
\begin{equation}\label{1.15.10}
a(n)=\int\nolimits_{\rrb}t^{n}\mu(\D t),\quad n=0,1,\ldots
\end{equation}  
This happens if and only if 
$$
\sum\nolimits_{k,l}a(k+l)\xi_{m}\sbar\xi{n}\Ge0,\quad \text{ $\xi_{n}=0$ but a finite number of $n$'s.}
$$
The above is referred to as positive definiteness of the matrix\,\footnote{\;This is pretty often considered in terms of its determinants.} $(a(k+l))_{k,l=0}^{\infty}$, which in turn can be viewed as positive definiteness of $n \to  {a(n)}$ considered
as a function on the \underbar{involution} semigroup\,\footnote{\;We use $\nnb$ for $\{0,1,\dots\}$.} $\nnb$.   
The important feature of the above is translational invariance
 $$
a(m+k,b)=a(m,n+k) \text{ for all } k\in\nnb
$$
of the Hankel matrix $(a(m,n))_{m,n=0}^{\infty}$, here with the  notation $a(m,n)\okr\int\nolimits_{\rrb}t^{m+n}\mu(\D t)$. This  {gives an opportunity} to make use of harmonic analysis ideas on involution semigroups combined with operator dilation theory as successfully done in \cite{nagy}.

In analogy with the above,  {in the present paper we consider the following problem.} Given a bisequence\,\footnote{\;It can be considered alternatively as an infinite matrix; we abandon this point of view in order to avoid any confusion.} $(s(m,n))_{m,n=0}^{\infty}$ of numbers,   {we ask}  under which conditions there is a $2\times 2$ positive definite matrix $\boldsymbol{\mu}\okr(\mu_{i,j})_{i,j=0}^{1}$ of measures on  $\rrb$ such that
\begin{equation}\label{1.27.07}
s(m,n)=\sum\nolimits_{i,j=0}^{1}\int\nolimits_{\rrb}{t^{m}}{\raisebox{3.5pt}{$^{(i)}$}}{t^{n}}{\raisebox{3.5pt}{$^{(j)}$}}\mu_{i,j}(\D t),\quad m,n=0,1,\ldots.
\end{equation}
The superscript ${}^{(i)}$ designates the $i$-th derivative and positive definiteness of the $2\times 2$ matrix valued function $(\mu_{i,j}(\,\cdot\,))_{i,j=0}^{1}$ defined on Borel subsets of $\rrb$ means that  for every Borel $\rho$ the $2\times2$  {complex} matrix $(\mu_{i,j}(\rho))_{i,j=0}^{1}$ is  positive definite.  Call  {this question} {the {\em Sobolev moment problem} as the integral on the right hand side of \eqref{1.27.07} is of Sobolev type. A bisequence $s$ enjoying the property \eqref{1.27.07} will be called a {\em Sobolev moment sequence}. Let us remark that positive definiteness of the matrix $\boldsymbol{\mu}$ does not exclude some of its entry measures $\mu_{i,j}$ to be signed or complex, (cf. \cite{rudin} for complex measures)  though it forces the diagonal entries $\mu_{i,i}$, $i=0,1$ to be positive measures anyway. 

The bisequence  $(s(m,n))_{m,n=0}^{\infty}$ is \underbar{no} \underbar{longer} translationally invariant (in Hankel sense) unlike in the  case of the Hamburger moment problem. Another, not translationally invariant ``moment problem'' on the real line $\rrb$ is treated\,\footnote{\;This is a cosine moment problem, not a power one like those commonly considered, so it  ``continuous'' in a sense.} in \cite{nie}. There a dilation argument is exploited as well.

Our main tool in analysing the Sobolev moment problem will be the theory of positive definite  forms, in the sense already defined and considered in \cite{ark}. Namely, we say that 
$$
\ttsm:\mathbb{N}\times \mathbb{N} \times\mathbb{C}^2\times\mathbb{C}^2\to \mathbb{C}
$$
is a \underline{form} over $\mathbb{N}\times \mathbb{N} \times\mathbb{C}^2\times\mathbb{C}^2$ (informally  abbreviated below to $\mathbb{N}^2\times \mathbb{C}^4$) if for any $m,n\in\mathbb{N}$ the mapping 
$\ttsm(m,n,\cdot,-)$ is Hermitian linear\footnote{ {The equivalent term  `sesqui-linear' appears often in the literature.}}. We say that $\ttsm$ is positive definite if 
\begin{equation}\label{2.27.07}
\sum_{k,l}\ttsm(m_{k},m_{l},\aasm_{k},\aasm_{l})\Ge0\quad\text{for any finite choice of sequences $(m_{k})_{k}$ and $(\aasm_{k})_{k}$,}
\end{equation}

Our main result can be summarised as follows. 
\begin{mas}
Given $(s(m,n))_{m,n=0}^{\infty}$, satisfying 
 the  condition 
\begin{equation}\label{phiD}
s(m+3,n)-3s(m+2,n+1)+3s(m+1,n+2)-s(m,n+3)=0,\quad m,n\in\nnb.
\end{equation}
there exists a form $\sssm$ over $\mathbb{N}^2\times \mathbb{C}^4$
which is translation invariant, i.e. satisfies
\begin{equation}\label{4.17.10}
\sssm(m+k,n,\aasm,\bbsm)=\sssm(m,n+k,\aasm,\bbsm),\quad m,n,k=0,1,\ldots
\end{equation}
and is in the following relation with $s$
\begin{equation}\label{sands}
\begin{array}{lcl}
s(m,n)&=&\sssm(m,n,\eesm_0,\eesm_0)+m\sssm(m-1,n,\eesm_1,\eesm_0)\\
&+&n\sssm(m,n-1,\eesm_0,\eesm_1)+mn\sssm(m-1,n-1,\eesm_1,\eesm_1),\quad m,n\in\mathbb{N},
\end{array}
\end{equation}
 with 
 \begin{equation}\label{2.28.09}
\eesm_{ 0}\okr(1,0) \text{ and } \eesm_{ 1}\okr(0,1)
\end{equation}
standing for  the basic vectors in $\ccb^{2}$ and a convention that whenever a negative argument appears the corresponding entry is valued to $0$.
Furthermore, if $\sssm$ is additionally positive definite
 then $s$ is a Sobolev moment bisequence. 

Conversely, for every Sobolev moment bisequence 
the condition \eqref{phiD} holds,
the form 
\begin{equation}\label{formdef}
\sssm(m,n,\aasm,\bbsm)=\sum_{ij=0}^1 a_i\bar b_j \int t^{m+n} d \mu_{ij},\quad \aasm=(a_0,a_1),\ \bbsm=(b_0,b_1),
\end{equation}
satisfies \eqref{sands} and is translation invariant and positive definite.

\end{mas}

Formula \eqref{phiD} comes from \cite[condition (1.2a), p. 309]{hel}, consequently we will call it the {\em Helton condition}.  Its operator theoretic master  can be found in \cite{hel}, however the circumstances there are rather limited.

\begin{rem}
As we show later, every Sobolev moment sequence has the form
$$
s(m,n)=\is{(T+N)^{m}V(1,0)}{(T+N)^{n}V(1,0)}_{\kkc},\quad m,n=0,1,\ldots
$$
where $\funk V{\ccb^{2}}\mathcal{K}$ is an isometry, $T$ is selfadjoint and $N$ is a nilpotent operator in some Hilbert space $\mathcal{K}$, see equations \eqref{1.19.10} and \eqref{3.19.10}, which can be viewed as the  Jordan dilation of $s$. 
\end{rem}


\subsection{The Sobolev moment problem; necessary conditions}\label{s2}

 {First we argue that the conditions appearing in  Quintessence are necessary, i.e. we show the 'Conversely' part of it. Suppose that $(s(m,n))_{m,n=0}^{\infty}$ is a Sobolev moment bisequence. }
The integral representation \eqref{1.27.07}  suggests the  following decomposition (whenever a negative argument appears the corresponding entry is valued to $0$)
\begin{align}\label{2.26.07}\begin{split}
s(m,n) &= s_{0.0}(m,n) +ms_{0,1}(m - 1,n) \\&+ ns_{1,0}(m,n - 1) + (m - 1)(n - 1)s_{1,1}(m - 1,n - 1)
\end{split}
\end{align}
as well as the properties 
\begin{equation}\label{1.26.07}\begin{split}
{s_{0,0}(m,n)=s_{0,0}(m+n,0)=s_{0,0}(0,m+n)}\\
{s_{0,1}(m,n)=s_{0,1}(m+n,0)=s_{0,1}(0,m+n)}\\
{s_{1,0}(m,n)=s_{1,0}(m+n,0)=s_{1,0}(0,m+n)}\\
{s_{1,1}(m,n)=s_{1,1}(m+n,0)=s_{1,1}(0,m+n)}.
\end{split}
\end{equation}
If a bisequence $s$ satisfies the above properties \eqref{2.26.07} and \eqref{1.26.07}  then for $\aasm=(a_{0},a_{1}),\bbsm=(b_{0},b_{1})\in\ccb^{2}$ and $m,n\in\nnb$ consider
\begin{equation}\label{3.26.07}
{\sssm(m,n,\aasm,\bbsm)}{\okr} a_{a}\bar b_{0}s_{0,0}(m,n)+a_{0}\bar b_{1}s_{0,1}(m,n)+a_{1}\bar b_{0}s_{1,0}(m,n)+a_{1}\bar b_{1}s_{1,1}(m,n).
\end{equation}
Consequently $\sssm$ is a \underbar{form} over $\nnb^{2}\times\ccb^{4}$. 
 Conditions \eqref{1.26.07} are equivalent to translational invariance of $\sssm$ on  $\nnb\times\nnb$, that is to
\begin{equation}\label{2.15.10}
\sssm(m,n,\aasm,\bbsm)=\sssm(m+n,0,\aasm,\bbsm)=\sssm(0,m+n,\aasm,\bbsm),\quad m,n=0,1,\ldots
\end{equation}
which is the same as \eqref{4.17.10}.  {The form $\sssm$ satisfies \eqref{sands}, which can be easily checked by direct calculation. To finish the study of the necessary conditions, we need to show the following statement.}

%

\begin{pro}\label{r1}
 If $(s(m,n))_{m,n=0}^{\infty}$  is a Sobolev moment bisequence and we define\footnote{ {Given a Sobolev moment sequence the decomposition into four translation invariant sequences \eqref{2.26.07}, \eqref{1.26.07} may not be unique, as will be seen later on, cf. Proof of Theorem  \ref{decomp}, formulas \eqref{Homo1} and \eqref{Homo2} therein. Therefore, in formula \eqref{5.19.10} we need to refer to a specific decomposition explicitly.}}
\begin{equation}\label{5.19.10}
s_{i,j}(m,n)\okr\int_{\rrb}t^{m+n}\mu_{i,j}(\D t),\quad i,j=0,1
\end{equation}
then $ \sssm$ defined by \eqref{3.26.07} is a positive definite form.
\end{pro}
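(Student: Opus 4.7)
The plan is to show positive definiteness of $\sssm$ in the sense of \eqref{2.27.07} by inserting the definition \eqref{5.19.10} of $s_{i,j}$ into \eqref{3.26.07}, turning the defining sum into an integral of a matrix-valued quadratic form, and then exploiting the hypothesis that $\boldsymbol{\mu}$ is positive definite at every Borel set.

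Fix a finite family of indices $(m_{k})_{k}\subset\nnb$ and vectors $\aasm_{k}=(a_{k}^{(0)},a_{k}^{(1)})\in\ccb^{2}$. Substituting \eqref{5.19.10} into \eqref{3.26.07}, interchanging the finite sum with the integral, and using $t^{m_{k}+m_{l}}=t^{m_{k}}\cdot t^{m_{l}}$, one obtains
$$
\sum_{k,l}\sssm(m_{k},m_{l},\aasm_{k},\aasm_{l})
 \;=\; \sum_{i,j=0}^{1}\int_{\rrb}P_{i}(t)\,\overline{P_{j}(t)}\,\mu_{i,j}(\D t),
$$
where $P_{i}(t)\okr\sum_{k}a_{k}^{(i)}t^{m_{k}}$ is a scalar polynomial. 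Hence the job reduces to checking that
$\sum_{i,j}\int_{\rrb}P_{i}\overline{P_{j}}\,\mu_{i,j}(\D t)\Ge 0$
for every pair of polynomials $(P_{0},P_{1})$.

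For this step I would proceed by approximation. Introduce the finite positive majorising measure $\nu\okr\mu_{0,0}+\mu_{1,1}+|\mu_{0,1}|+|\mu_{1,0}|$ (the last two being total variations of the possibly signed or complex off-diagonal entries); all four $\mu_{i,j}$ are absolutely continuous with respect to $\nu$ and have bounded densities on the common compact interval containing the relevant supports. Approximate $P_{0}$ and $P_{1}$ uniformly on that interval by step functions $P_{i}\approx\sum_{\alpha}c_{i}^{(\alpha)}\chi_{\rho_{\alpha}}$ subordinate to a single finite Borel partition $\rrb=\bigsqcup_{\alpha}\rho_{\alpha}$. For such step functions
$$
\sum_{i,j=0}^{1}\int_{\rrb}P_{i}\overline{P_{j}}\,\mu_{i,j}(\D t)
\;=\; \sum_{\alpha}\sum_{i,j=0}^{1} c_{i}^{(\alpha)}\,\overline{c_{j}^{(\alpha)}}\,\mu_{i,j}(\rho_{\alpha})
\;\Ge\;0,
$$
because by hypothesis each scalar $2\times 2$ matrix $(\mu_{i,j}(\rho_{\alpha}))_{i,j=0}^{1}$ is positive semi-definite. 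Passing to the limit against the finite measure $\nu$ yields the desired inequality for $(P_{0},P_{1})$.

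The only conceptual point is the standard matrix-valued analogue of the classical fact that a measure $\mu$ is positive iff $\int f\bar f\,d\mu\Ge 0$ for all sufficiently rich test functions $f$; the off-diagonals $\mu_{0,1},\mu_{1,0}$ need not be positive, yet the pointwise positivity of $(\mu_{i,j}(\rho))$ is precisely what forces $\sum_{i,j}\int f_{i}\overline{f_{j}}\,d\mu_{i,j}\Ge 0$. This is the only mildly technical step; the rest is a direct rearrangement carried over from the representation \eqref{1.27.07} via \eqref{5.19.10}.
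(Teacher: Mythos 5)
Your reduction of \eqref{2.27.07} to the inequality $\sum_{i,j=0}^{1}\int_{\rrb}P_{i}\overline{P_{j}}\,\mu_{i,j}(\D t)\Ge0$ is a correct and clean rearrangement of \eqref{3.26.07} with \eqref{5.19.10}, and it takes a genuinely different route from the paper's: the paper invokes the Naimark-type representation of Theorem \ref{t1.29.10}, writes $\mu_{i,j}(\,\cdot\,)=\is{E(\,\cdot\,)R\eesm_{i}}{R\eesm_{j}}$ with $E$ a spectral measure in an auxiliary Hilbert space, and concludes by multiplicativity of the spectral integral, the whole double sum collapsing into $\int_{\rrb}\|\cdots\|^{2}\Ge0$. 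Your approach trades that dilation machinery for the elementary observation that positivity of each scalar matrix $(\mu_{i,j}(\rho))_{i,j=0}^{1}$ propagates to simple functions subordinate to a common partition; this is more self-contained and needs no external theorem, at the cost of an approximation argument.

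That approximation step is where a genuine gap sits. You place everything on ``the common compact interval containing the relevant supports'', but nothing in the Sobolev moment problem forces compact support --- it is a Hamburger-type problem on all of $\rrb$ --- and the test functions $P_{i}$ are polynomials, hence unbounded, so uniform approximation by step functions on a compact interval is simply unavailable in general. The repair is standard but has to be stated: positive definiteness of $(\mu_{i,j}(\rho))_{i,j=0}^{1}$ for every Borel $\rho$ gives $|\mu_{0,1}(\rho)|\Le\sqrt{\mu_{0,0}(\rho)\mu_{1,1}(\rho)}\Le\tfrac{1}{2}\big(\mu_{0,0}(\rho)+\mu_{1,1}(\rho)\big)$, whence the total variations $|\mu_{0,1}|$, $|\mu_{1,0}|$ are dominated by the positive finite measure $\tfrac{1}{2}(\mu_{0,0}+\mu_{1,1})$ and all their absolute moments are finite together with those of the diagonal entries. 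One then proves the inequality for the restrictions of the $\mu_{i,j}$ to $[-R,R]$, where your step-function argument is valid, and lets $R\to+\infty$ by dominated convergence against $\mu_{0,0}+\mu_{1,1}$. With that insertion your proof closes; as written, the limit passage does not.
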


Before proving it let us recall some necessary background. Note that if $s$ is a Sobolev moment sequence,  the measure $\boldsymbol{\mu}$ can be always viewed as a semispectral\, \footnote{\;It is sometimes referred to as POV(=positive operator valued) measure.} one in $\ccb^{2}$ (cf. \cite[Theorem 4 p. 30 ]{mlak}), i.e. a $2\times2$-matrix valued function 
defined on Borel subsets of $\rrb$, which is countably additive (each of the two possibilities: in strong and weak operator topology, in this case is equivalent in this case to the entry-wise convergence). A semispectral measure becomes \underbar{spectral} if it is orthogonal projection valued.  
 
\begin{thm}[cf. \cite{mlak}, p. 30]\label{t1.29.10}
A $2\times 2$  matrix $(\mu_{i,j})_{i,j=0}^{1}$ of measures on  $\rrb$ is positive definite if and only if there is a Hilbert space $\kkc$, and a bounded linear operator  $\funk R{\ccb^{2}}\kkc$ and a spectral measure $E$ in $\kkc$
such that
\begin{equation}\label{1.29.10}
(\mu_{i,j}(\rho))_{i,j=0}^{1}=R^{*}E(\rho)R,\quad \text{$\rho$ a Borel subset of $\rrb$}.
\end{equation}
\end{thm}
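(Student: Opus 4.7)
The plan is to exploit the representation of the positive definite matrix of measures provided by Theorem \ref{t1.29.10} and then verify positive definiteness by rewriting the relevant quadratic form as a squared norm in the dilation Hilbert space.

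First I would apply Theorem \ref{t1.29.10} to the matrix $\boldsymbol{\mu}=(\mu_{i,j})_{i,j=0}^{1}$, obtaining a Hilbert space $\kkc$, a bounded linear $\funk R{\ccb^{2}}\kkc$ and a spectral measure $E$ on $\rrb$ with $(\mu_{i,j}(\rho))_{i,j=0}^{1}=R^{*}E(\rho)R$. Let $T=\int t\,E(\D t)$ be the (possibly unbounded) selfadjoint operator associated with $E$ by the spectral theorem. Because $s$ is a Sobolev moment bisequence, every scalar integral $\int t^{2p}\,\mu_{i,i}(\D t)=s_{i,i}(2p,0)$ is finite, hence $R\eesm_{i}$ lies in the domain of $T^{p}$ for every $p\in\nnb$ and every $i\in\{0,1\}$, and one has the identity
$$
\int_{\rrb}t^{p}\mu_{i,j}(\D t)=\is{T^{p}R\eesm_{i}}{R\eesm_{j}}_{\kkc},\quad i,j=0,1,\ p\in\nnb.
$$

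Next I would plug this into the definition \eqref{3.26.07}. Writing $\aasm=a_{0}\eesm_{0}+a_{1}\eesm_{1}$ and $\bbsm=b_{0}\eesm_{0}+b_{1}\eesm_{1}$, linearity in $R$ and sesqui-linearity of the scalar product give
$$
\sssm(m,n,\aasm,\bbsm)=\is{T^{m+n}R\aasm}{R\bbsm}_{\kkc}.
$$
Splitting $T^{m+n}=T^{m}T^{n}$ on the vectors $R\aasm,R\bbsm$ (which lie in $\bigcap_{p}\dom T^{p}$) and using selfadjointness of $T^{n}$ on that common invariant domain rewrites this as
$$
\sssm(m,n,\aasm,\bbsm)=\is{T^{m}R\aasm}{T^{n}R\bbsm}_{\kkc}.
$$

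With this identity the proof of \eqref{2.27.07} is a one line computation: for any finite collections $(m_{k})_{k}\sbs\nnb$ and $(\aasm_{k})_{k}\sbs\ccb^{2}$,
$$
\sum_{k,l}\sssm(m_{k},m_{l},\aasm_{k},\aasm_{l})=\sum_{k,l}\is{T^{m_{k}}R\aasm_{k}}{T^{m_{l}}R\aasm_{l}}_{\kkc}=\Bigl\|\sum_{k}T^{m_{k}}R\aasm_{k}\Bigr\|_{\kkc}^{2}\Ge0,
$$
which gives the desired positive definiteness.

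The only delicate point is the passage $T^{m+n}=T^{m}T^{n}$ when $T$ is unbounded; this is the step I would expect to require the most care. It is handled by the observation above that $R\eesm_{0},R\eesm_{1}$ lie in the common domain $\bigcap_{p}\dom T^{p}$ (in fact in the space of $C^{\infty}$-vectors of $T$) which is invariant under every polynomial of $T$, so all manipulations are legitimate on the vectors that actually occur. Alternatively one can bypass unboundedness altogether by a standard truncation: replace $E$ by the spectral measure restricted to $[-N,N]$, carry out the bounded calculation, and let $N\to\infty$ using dominated convergence justified by the finiteness of all moments of $\mu_{i,i}$.
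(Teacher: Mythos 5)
Your proposal does not prove Theorem \ref{t1.29.10}; it presupposes it. The statement to be established is the equivalence between positive definiteness of the $2\times 2$ matrix of measures $(\mu_{i,j})_{i,j=0}^{1}$ and the existence of the dilation \eqref{1.29.10}. Your very first step is to \emph{apply} Theorem \ref{t1.29.10} to $\boldsymbol{\mu}$, and everything that follows uses the resulting triple $(\kkc,R,E)$ to show that the form $\sssm$ of \eqref{3.26.07} satisfies \eqref{2.27.07}. That is the content of Proposition \ref{r1}, not of Theorem \ref{t1.29.10}; read as an argument for Theorem \ref{t1.29.10} it is circular. (In the paper this theorem carries no proof at all: it is quoted from Mlak's dissertation as an instance of the Na\u{\i}mark dilation theorem for semispectral measures. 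Your computation is, in substance, the paper's proof of Proposition \ref{r1}, with the cosmetic difference that you route the multiplicativity of the spectral integral through the powers of $T=\int t\,E(\D t)$ and a domain discussion, where the paper collapses the double sum directly into $\int\|\cdot\|^{2}$ against $E(\D t)$.)

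A proof of Theorem \ref{t1.29.10} itself would have to contain two things, neither of which appears in your write-up. The easy direction: if \eqref{1.29.10} holds, then for $c=(c_{0},c_{1})\in\ccb^{2}$ and Borel $\rho$ one has $\sum_{i,j}\mu_{i,j}(\rho)c_{i}\sbar c{_j}=\is{E(\rho)Rc}{Rc}=\|E(\rho)Rc\|^{2}\Ge0$, so each matrix $(\mu_{i,j}(\rho))_{i,j}$ is positive definite. The substantive direction is a GNS/RKHS-type construction: from the positive-matrix-valued measure $F(\rho)\okr(\mu_{i,j}(\rho))_{i,j=0}^{1}$ one forms the linear span of symbols $\rho\otimes c$ with the kernel $\is{\rho\otimes c}{\sigma\otimes d}\okr\is{F(\rho\cap\sigma)c}{d}$, checks positive semidefiniteness of this kernel by refining any finite family of Borel sets to a disjoint partition and invoking positivity of each $F(\tau)$, completes to get $\kkc$, defines $E(\rho)$ as the extension of $\sigma\otimes c\mapsto(\rho\cap\sigma)\otimes c$ (checking it is an orthogonal projection valued, countably additive measure), and sets $Rc\okr\rrb\otimes c$, which is bounded since $\|Rc\|^{2}=\is{F(\rrb)c}{c}$. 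That construction is the missing idea; without it the theorem is assumed rather than proved.
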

\begin{rem}\label{rxxx}
 In general $R^{*}R=(\mu_{i,j}(\rrb))_{i,j=0}^{\infty}$; in particular,  $R^{*}$ is the orthogonal projection of $\kkc$ onto $\ccb^{2}$ if and only if   $(\mu_{i,j}(\rrb))_{i,j=0}^{1}=\diag(1,1)$. 
\end{rem}
 
 Notice that \eqref{1.29.10} is equivalent to
$$
 \mu_{i,j}(\,\cdot\,)=\is{E(\,\cdot\,)R\eesm_{i}}{R\eesm_{j}},\quad i,j=0,1.
$$

\begin{proof}[Proof of Proposition \ref{r1}]

 {With $\aasm_k=(a_{0,k},a_{1,k})$ and $\kkc$, $E$ and $R$ as above, one has}
\begin{align*}
\sum_{k,l}\sssm(m_{k},m_{l},\aasm_{k},\aasm_{l})&=
\sum\nolimits_{k,l}\sum\nolimits_{i,j=0}^{1}\int_{\rrb}a_{i,k}{t^{m_k}}{\raisebox{3.5pt}{$^{(i)}$}}\sbar a_{j,l}{t^{m_l}}{\raisebox{3.5pt}{$^{(j)}$}}\mu_{i,j}(\D t)\\
&=\sum\nolimits_{i,j=0}^{1}\int_{\rrb}\big(\sum\nolimits_{k}a_{i,k}{t^{m_k}}{\raisebox{3.5pt}{$^{(i)}$}}\big)\big(\sum\nolimits_{l}\sbar a_{j,l}{t^{m_l}}{\raisebox{3.5pt}{$^{(j)}$}}\big)\mu_{i,j}(\D t)\\
&=\sum\nolimits_{i,j=0}^{1}\int_{\rrb}\big(\sum\nolimits_{k}a_{i,k}{t^{m_k}}{\raisebox{3.5pt}{$^{(i)}$}}\big)\big(\sum\nolimits_{n}\sbar a_{j,l}{t^{m_l}}{\raisebox{3.5pt}{$^{(j)}$}}\big)\is{E(\D t)R\eesm_{i}}{R\eesm_{j}}_{\kkc}\\
&=\sum\nolimits_{i,j=0}^{1}\isb{\int_{\rrb}\big(\sum\nolimits_{k}a_{i,k}{t^{m_k}}{\raisebox{3.5pt}{$^{(i)}$}}\big)\big(\sum\nolimits_{l}\sbar a_{j,l}{t^{m_l}}{\raisebox{3.5pt}{$^{(j)}$}}\big)E(\D t)R\eesm_{i}}{R\eesm_{j}}_{\kkc}\\
&\stackrel{{\tt s}}=\int_{\rrb}\big\|\sum\nolimits_{i=0}^{1}\sum\nolimits_{k}a_{i,k}{t^{m_k}}{\raisebox{3.5pt}{$^{(i)}$}}E(\D t)R\eesm_{i}\big\|^{2}_{\kkc}\Ge0,
\end{align*}
where in the passage $\stackrel{\tt s}{=}$ multiplicativity of the spectral integral is used.
\end{proof}

\subsection{More on  Helton's condition}

We prove one of our basic results. 

\begin{thm}\label{decomp} { Given a bisequence $(s(m,n))_{m,n=0}^{\infty}$, the following conditions are equivalent}
\begin{enumerate}
\item[(h1)] $s$ satisfies the Helton condition \eqref{phiD};
\item[(h2)] there are four bisequences $(s_{i,j}(m,n))_{m,n=0}^{\infty}$, $i,j=0,1$ with 
the translation invariance properties \eqref{1.26.07} and  such that the bisequence $s$ decomposes as in \eqref{2.26.07};
\item[(h3)] there exists a form $\sssm$ with 
the translation invariance property \eqref{4.17.10}, satisfying \eqref{sands}.
\end{enumerate}

\end{thm}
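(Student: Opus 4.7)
The plan is to establish (h2) $\Leftrightarrow$ (h3) almost tautologically, then (h2) $\Rightarrow$ (h1) by a direct calculation, and finally (h1) $\Rightarrow$ (h2) as the main step. For the first, I would observe that any Hermitian-linear form $\sssm$ on $\nnb^{2}\times\ccb^{4}$ is uniquely determined by the four bisequences $s_{i,j}(m,n):=\sssm(m,n,\eesm_{i},\eesm_{j})$, $i,j\in\{0,1\}$, via the expansion in the basis vectors already recorded in \eqref{3.26.07}. Under this identification, translation invariance \eqref{4.17.10} of $\sssm$ is precisely translation invariance \eqref{1.26.07} of each $s_{i,j}$, and the formula \eqref{sands} becomes \eqref{2.26.07}; hence (h2) $\Leftrightarrow$ (h3) is essentially a change of notation.

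For (h2) $\Rightarrow$ (h1), write $Df(m,n):=f(m+1,n)-f(m,n+1)$, so that the Helton condition \eqref{phiD} reads $D^{3}s=0$. By linearity of $D$ it suffices to check that $D^{3}$ annihilates each of the four summands in \eqref{2.26.07}. Translation-invariant sequences $f(m+n)$ lie in $\ker D$; the terms $m\,f(m+n-1)$ and $n\,f(m+n-1)$ are mapped by $D$ to $\pm f(m+n)$, hence lie in $\ker D^{2}$; and a short computation shows that $D\big((m-1)(n-1)f(m+n-2)\big)=(n-m)f(m+n-1)$, to which applying $D$ again yields $-2 f(m+n)$, a translation invariant, so the last summand lies in $\ker D^{3}$.

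For (h1) $\Rightarrow$ (h2), the key change of variables is $p:=m+n$, $q:=m$, with $\tilde s(p,q):=s(q,p-q)$ defined on the triangle $\{(p,q):0\le q\le p\}$. Under this change, $D$ becomes the forward difference in $q$ (up to a shift in $p$), so Helton's condition is equivalent to $\Delta_{q}^{3}\tilde s\equiv 0$, i.e.\ that for each $p$ the map $q\mapsto \tilde s(p,q)$ is a polynomial in $q$ of degree at most two,
\[ \tilde s(p,q)=a_{0}(p)+a_{1}(p)\,q+a_{2}(p)\,q^{2}. \]
The sought decomposition \eqref{2.26.07} reads, in these coordinates,
\[ \tilde s(p,q)=f_{0,0}(p)+q\,f_{0,1}(p-1)+(p-q)\,f_{1,0}(p-1)+q(p-q)\,f_{1,1}(p-2), \]
with $s_{i,j}(m,n)=f_{i,j}(m+n)$. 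Expanding the right-hand side and matching the coefficients of $1$, $q$, $q^{2}$ produces three linear relations for the four unknowns $f_{0,0}(p),\,f_{0,1}(p-1),\,f_{1,0}(p-1),\,f_{1,1}(p-2)$; the system is always solvable (for instance set $s_{1,0}\equiv 0$ and read off the other three from $a_{0},a_{1},a_{2}$, using the negative-argument convention to handle $p\le 1$), and the one-dimensional freedom per $p$ reflects exactly the non-uniqueness flagged in the footnote to Proposition~\ref{r1}. The crux of the argument is this geometric observation: after passing to diagonal coordinates $(p,q)$, the rigid-looking discrete PDE \eqref{phiD} collapses to the transparent requirement that $\tilde s$ be quadratic in $q$ along anti-diagonals, after which the rest is elementary polynomial bookkeeping.
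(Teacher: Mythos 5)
Your proof is correct, and while your treatment of (h2)$\Leftrightarrow$(h3) and of (h2)$\Rightarrow$(h1) runs essentially as in the paper (the paper just says ``group the summands''; your operator $D$ with $D^{3}s=0$ is a tidy way to organize that), your argument for the main implication (h1)$\Rightarrow$(h2) is genuinely different. The paper works in the convolution algebra $\ccb[\nnb\times\nnb]$: it rewrites the Helton condition as $\iif(p_1)\sbs\ker s^\dagger$ with $p_1=\delta_{3,0}-3\delta_{2,1}+3\delta_{1,2}-\delta_{0,3}$, introduces the map $\Gamma$ into $\ccb^{2\times 2}[\nnb]$ whose matrix entries encode the four sought sequences, proves the identity $\iif(p_1)=\ker\Gamma$ by an induction along antidiagonals (analysing kernels of explicit $3\times(n+1)$ matrices $X_n$), and then factors $s^\dagger=\Phi\circ\Gamma$ via the fundamental homomorphism theorem. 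You instead pass to antidiagonal coordinates $(p,q)=(m+n,m)$, note that the Helton operator conjugates to a third forward difference in $q$, conclude that $s$ is a quadratic polynomial in $q$ on each antidiagonal, and match coefficients; since $\{1,\,q,\,q(p-q)\}$ spans the quadratics for $p\Ge 2$ (and the small-$p$ cases are handled by the negative-argument convention), the system is always solvable, and the residual freedom per antidiagonal matches the non-uniqueness the paper flags. Your route is shorter and avoids the ideal machinery; the paper's route buys the structural identity \eqref{Homo2}, which characterises exactly the linear relations forced on every Helton bisequence and exhibits the decomposition as a factorisation through $\Gamma$ --- information not visible in the coefficient-matching argument. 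One cosmetic point you inherit from the source: \eqref{2.26.07} carries the coefficient $(m-1)(n-1)$ where \eqref{sands}, the integral representation and the paper's own computations all use $mn$; you use $(m-1)(n-1)$ in one direction and $q(p-q)=mn$ in the other. Both versions are annihilated by $D^{3}$ and both yield a solvable matching problem, but you should fix a single convention (the consistent one is $mn$).
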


\begin{proof}
The implication (h3)$\Rightarrow$(h2) is trivial, the reverse implication was proved in the previous section, see formula \eqref{3.26.07}.
To show (h2)$\Rightarrow$(h1) it is enough to evaluate  the left hand side of \eqref{phiD}, which after using  we use \eqref{2.26.07} and \eqref{1.26.07} and   grouping of the summands results in  getting $0$. 
The main part of the proof is the implication (h1)$\Rightarrow$(h2).

Assume that  (h1) is satisfied. Let $\ccb[\nnb]$ stands for the linear space of  
of complex finitely supported functions on $\nnb$ considered point-wisely and $\ccb[\nnb\times\nnb]$ that of finitely supported functions on $[\nnb\times\nnb]$. 
The Kronecker deltas $\delta_{n}$ and $\delta_{m,n}$   are apparently members of $\ccb[\nnb]$ and $\ccb[\nnb\times\nnb]$, respectively.
We naturally embed $\nnb$ in  $\CS$ as $n\mapsto\delta_n$; therefore  $a\in\CS$ can be written as
$\sum_{n\in \nnb} a(n)\delta_n$; for technical reasons  set $\delta_{-1}=\delta_{-2}=0$. 

The bisequence $s\okr(s(m,n))_{m,n=0}^{\infty}$  determines the unique linear mapping
$$
s^\dagger:\CSS\to\ccb
$$  
by 
$$
 s ^\dagger(a)\okr\sum_{m,n\in  \nnb} a(m,n) s (m,n),\quad a\in\CSS.
$$
Clearly
$$
 s (m,n)= 
  s ^{\dagger}(\delta_{m,n}).
$$

Moreover if we consider $\ccb[\nnb]$  as  the commutative algebra with the standard convolution 
$$
(a*b)(n)=\sum_{x+y=n} a(x)b(y), \quad a,b\in\ccb[\nnb]
$$
the condition \eqref{phiD} turns out to be equivalent to
$$
s^\dagger (p_1*\delta_{m,n})=0,\quad m,n\in\nnb,
$$
with 
$$
p_1=\delta_{3,0}-3\delta_{2,1}+3\delta_{1,2}-\delta_{0,3}.
$$

%
%
%
%
%
%
We define a linear mapping
 $$
 \Gamma: \CSS \to \ccb^{2\times 2}[ \nnb],
 $$
by
$$
\Gamma(\delta_{m,n}) = \begin{pmatrix}
\delta_{m+n} & m \delta_{m+n-1}\\
n\delta_{m+n-1} & mn \delta_{m+n-2}
\end{pmatrix},\quad m,n\in\nnb.
$$
%
%
%
%
%
%
%
%
%

Let us now get back to the assumption \eqref{phiD}. Note that it can be rewritten as
\begin{equation}\label{Homo1}
\iif(p_1)\sbs \ker s^\dagger,
\end{equation}
where $\iif(p_1)$ denotes the ideal generated by $p_1$. We will show \eqref{Homo2} later on that
\begin{equation}\label{Homo2}
\iif(p_1)=\ker \Gamma,
\end{equation}
now observe that this will finish the proof. Indeed,  the fundamental homomorphism theorem together with \eqref{Homo1} and \eqref{Homo2} implies that there exists a linear mapping 
$$
\Phi:\ccb^{2\times 2}[\nnb]\to\ccb,
$$
satisfying
$$
\Phi\circ\Gamma = s^\dagger.
$$
Writing this equality on $\delta_{m,n}$ we get 
$$
s(m,n)=s^{\dagger}(\delta_{m,n})= \Phi \begin{pmatrix}
\delta_{m+n} & m \delta_{m+n-1}\\
n\delta_{m+n-1} & mn \delta_{m+n-2}
\end{pmatrix}
$$ $$
=\Phi\begin{pmatrix}
\delta_{m+n} & 0\\
0 & 0
\end{pmatrix}+
m \Phi\begin{pmatrix}
0 & \delta_{m+n-1}\\
0 & 0
\end{pmatrix}
+
n\Phi\begin{pmatrix}
0 & 0\\
 \delta_{m+n-1} & 0
\end{pmatrix}+
mn\Phi\begin{pmatrix}
0 & 0\\
0 & \delta_{m+n-2},
\end{pmatrix}
$$
which gives the desired representation \eqref{2.26.07}  {and the relation \eqref{1.26.07}.}

Hence, to finish the proof of Theorem \ref{decomp} it is enough to show that \eqref{Homo2} holds.
The inclusion $\sbs$ follows by direct computation. To prove \eqref{Homo2} it remains to show the converse
$\ker \Gamma\sbs \iif(p_1)$. Let $a\in \ker \Gamma$ which means  
\begin{equation}
\label{e1}
\begin{gathered}
\sum_{m,n} a(m,n)\ \delta_{m+n}=0,\quad 
\sum_{m,n}  a(m,n)\ m\ \delta_{m+n-1} =0,\\
\sum_{m,n} a(m,n)\ n\ \delta_{m+n-1}=0,\quad
\sum_{m,n} a(m,n)\ mn\ \delta_{m+n-2}=0,
\end{gathered}
\end{equation}
with the convention that $\delta_{-1}=\delta_{-2}=0$.

 We show now  that  \eqref{e1} implies  $a\in \iif(p_1)$. The following decomposition holds
$$
a=\sum_{n\in\nnb} a^{n},\qquad a^n\okr \sum_{i=0}^n a(i,n-i)\delta_{i,n-i},\ n\in \nnb
$$
with the first sum to be terminating. Note that each  $a^n$ $(n\in\nnb)$ enjoys  the properties \eqref{e1} as well. Hence, without any loss of generality we
may  assume that
 $$
  a=\sum_{i=0}^n a_i \delta_{i,n-i},
$$
for which formulae \eqref{e1} reduce to
$$
\sum_{i=0}^n a_i=0 ,\quad \sum_{i=0}^n i\cdot a_i=0,\quad \sum_{i=0}^n (n-i)\cdot a_i=0 ,\quad
\sum_{i=0}^n i(n-i)\cdot a_i=0.
$$
Observe, that the second of these equations is a consequence of the first and the third and we may skip it.  The remaining equations can be written in the equivalent form
$$
(a_0\dts a_n)^\top\in\ker X_n,
$$
where the matrix $X_n$ is given by
$$
X_n= \matp{ 1 & 1 &  1 & \cdots & 1 & 1\\
          n & n-1 & (n-2) & \cdots & 1 & 0 \\
          0\cdot n & 1\cdot(n-1) & 2\cdot(n-2) & \cdots & (n-1)\cdot 1 & n\cdot 0}.
$$
Note that for $n=0,1$ the matrix $X_n$ has a trivial kernel. For $n\geq 2$ the first and the last two columns of $X_n$ are clearly linearly independent and hence
\begin{equation}\label{dimker}
\dim\ker X_{n} = (n+1) -3 = n-2,\quad n\geq 3.
\end{equation}
We will show simultaneously by induction with respect to $n\in\nnb$, $n\geq 3$ that\\
\begin{itemize}
\item[(i)] if $(a_0\dts a_n)^\top\in\ker X_n$ is a solution of the above system of equations then
$\sum_{i=0}^n a_i\delta_{i,n-i}\in \iif(p_1)$,
\item[(ii)] there exists $(a_0\dts a_n)^\top\in\ker X_n$ with $a_0\neq 0$,
\end{itemize}
 which will finish the proof.
 The kernel of $X_3$ is spanned by
$(a_0,a_1,a_2,a_3)^\top=(-1,3,-3,1)^\top$, hence $a=p_1$ and in consequence (i) and (ii) are satisfied. Now suppose
that both (i) and (ii) are true for some $n\geq 3$. First observe that if $y\in \ker X_n$ than
$(0,y)^\top\in\ker X_{n+1}$. Indeed, subtracting the second row of $X_{n+1}$ from the third
one obtains the matrix
$$
 \matp{ 1 & 1 &  1 & \cdots & 1 & 1\\
          n+1 & n & (n-1) & \cdots & 1 & 0 \\
          -n-1 & 0  & (n-1) & \cdots & (n-1)\cdots 1 &  0}=\left( \begin{array}{c|c} 1 & \\ n+1
          & X_n \\ -n-1 \end{array}\right).
$$
Consequently, by \eqref{dimker}, there are $n-2$ linearly independent vectors in $\ker
X_{n+1}$ of the form $(0,a_1,a_2\dts a_{n+1})^\top$. Note that for all these vectors one has
$$
a=\sum_{i=1}^{n+1} a_i\delta_{i,n+1-i}= \delta_{1,0}*\sum_{i=0}^{n} a_{i+1}\delta_{i,n-i}.
$$
 By induction, the element
 $$
\sum_{i=0}^{n} a_{i+1}\delta_{i,n-i}
 $$
is in $\iif(p_1)$, as $(a_1\dts a_{n+1})^\top\in\ker X_n$. Hence, $a\in \iif(p_1)$. In view of
\eqref{dimker} (with $n$ replaced by $n+1$), to finish the induction step  it is enough to show that
there exists a vector $(a_0,a_1\dts a_{n+1})^\top\in\ker X_{n+1}$ with $a_0\neq 0$ and $a=\sum_{i=0}^{n+1}
a_i\delta_{i,n+1-i}\in \iif(p_1)$. By the induction assumption (ii) there exists $(b_0\dts
b_n)\in\ker X_n$ with $b_0\neq 0$. We define $b_{-1}=b_{n+1}=0$ and $a_i=b_{i-1}-b_{i}$,
$i=0\dts n+1$. Since $b=\sum_{i=1}^n b_i \delta_{i,n-i}\in \iif(p_1)$, we have
$$
a=\sum_{i=0}^{n+1} a_i \delta_{i,n+1-i}=p_0*b\in \iif(p_1), \quad p_0=\delta_{1,0}-\delta_{0,1}.
$$
Since $a_0=-b_0\neq0$, the  vector  $(a_0,a_1\dts a_{n+1})^\top$  is linearly independent from
vectors of the form $(0,y)^\top$ with $y\in\ker X_n$, which finishes the induction step.
\end{proof}

 {Comparing what we proved so far with Quintessence in Introduction we see that what  remains to  show is that the positive definiteness of $\sssm$ implies existence of the measures $(\mu_{ij})_{ij=0}^1$.  This turns in considering a moment problem of Hamburger type  for the  $2\times 2$ matrix $(\sssm(m,n,\aasm,\bbsm))_{m,n=0}^{\infty}$.  One of the ways of solving this problem is to employ  dilation theory,  as understood in  \cite{polar,ark}. This can  be conveniently carried out in the  RKHS environment. }

\subsection{The RKHS buildup}\label{s4}
As mentioned earlier positive definiteness \eqref{2.27.07} makes it possible to introduce the reproducing kernel Hilbert space and consider operators therein.
Define the sections $\sssm_{(n,\bbsm)}$ and their linear span $\ddc$ as
\begin{equation}\label{5.17.10}
\sssm_{(n,\bbsm)}\okr \sssm(\,\cdot\,,n,-,\bbsm),\quad\ddc\okr\lin\zb{\sssm_{(n,\bbsm)}}{(n,\bbsm)\in \nnb\times\ccb^{2}}
\end{equation}
and equipped $\ddc$ with an inner product extended from\,\footnote{\;Notice $\sssm_{(n,\bbsm)}$ is antilinear in $\bbsm$.}
\begin{equation}\label{2.03.10}
\is{\sssm_{(n,\bbsm)}}{\sssm_{(m,\aasm)}}\okr \sssm(m,n,\aasm,\bbsm).
\end{equation}

The reproducing kernel Hilbert space $\hhc$ determined by \eqref{2.03.10} is composed of complex valued functions on $\nnb\times\ccb^{2}$ whereas the reproducing property (restricted to $\ddc$ which is a dense subspace of $\hhc$) is precisely
\begin{equation}\label{3.17.10}
\sssm_{(n,\bbsm)}(m,\aasm)=\is{\sssm_{(n,\bbsm)}}{\sssm_{(m,\aasm)}},
\end{equation}
which is just the definition of the inner product read {\em \`a rebours}.

Fix $k\in \nnb$ and set for $(n,\bbsm)\in\nnb\times\ccb^{2}$
\begin{gather}
(\varPhi(k)\sssm_{(n,\bbsm)})(m,\aasm)\okr \sssm_{(n,\bbsm)}(k+m,\aasm),\quad (m,\aasm)\in\nnb\times\ccb^{2}\label{1.17.10}\\\varPsi(k)\sssm_{(n,\bbsm)}\okr \sssm_{(n+k,\bbsm)}\label{2.17.10}
\end{gather}
defined so far for $\sssm_{(n,\bbsm)}$'s.   Notice $\varPsi(k)$ extends linearly to the whole of $\ddc$ as an operator.

Fortunately, and exclusively in this case, 
\begin{align*}
\is{\sssm_{(m,\aasm)}}{\varPsi(k)\sssm_{(n,\bbsm)}}&\stackrel{\eqref{2.17.10}}{=}\is{\sssm_{(m,\aasm)}}{\sssm_{(n+k,\bbsm)}}\stackrel{\eqref{3.17.10}}{=}\sssm_{(m,\aasm)}(n+k,\bbsm)
\\
&\stackrel{\eqref{5.17.10}}{=}\sssm(n+k,m,\bbsm,\aasm)\stackrel{\eqref{4.17.10}}{=}\sssm(n,m+k,\bbsm,\aasm)
\\
&
\stackrel{\eqref{5.17.10}}{=}\sssm_{(m+k,\aasm)}(n,\bbsm)
\stackrel{\eqref{1.17.10}}{=}\varPsi(k)\sssm_{m,\aasm}(n,\bbsm)
\\
&\stackrel{\eqref{3.17.10}}{=}\is{\varPsi(k)\sssm_{(m,\aasm)}}{\sssm_{(n,\bbsm)}},
\end{align*}
which implies $\varPsi(k)$ is symmetric. Furthermore
\begin{align}\label{1.18.10}
\varPhi(k)\sssm_{(n,\bbsm)}(m,\aasm)&\stackrel{\eqref{2.17.10}}{=}\sssm_{(n,\bbsm)}(k+m,\aasm)\stackrel{\eqref{5.17.10}}{=}\sssm_{(n+k,\bbsm)}(m,\aasm)
\stackrel{\eqref{2.17.10}}{=}\varPsi(k)\sssm_{(n,\bbsm)}(m,\aasm).
\end{align}
which means $\varPhi(k)=\varPsi(k)$ on $\ddc$. As a consequence  $\varPhi(k)$ extends to a linear operator as well. 

Both, $\varPhi(k)$
and $\varPsi(k)$ are the standard, naturally defined operators in a RKHS.

\subsection{Jordan operators in the RKHS $\hhc$}\label{s5}
With notation \eqref{2.28.09} in mind consider an operator $N$ defined on $\ddc$ as the linear extension of the formula
$$N\sssm_{(n,\bbsm)}\okr \sbar b_{0}\sssm_{(n,\eesm_{1}).}$$

Set $T\okr\varPhi(1)$; because the mapping $k\to\varPhi(k)$ is a semigroup homorphism on $\nnb$ to linear operators on $\ddc$ we have $T^{k}=\varPhi(k)$. 
Apparently,
\begin{equation}\label{3.18.10}
N\text{ commutes with } T \text{ on }\ddc
\end{equation}
and 
\begin{equation}\label{4.18.10}
N^{2}=0 \text{ on } \ddc, \text{ that  is }N\text{ is nilpotent}.
\end{equation}
Due to \eqref{3.18.10} and \eqref{4.18.10}, the (commutative) Newton's binomial formula is applicable and reduces to
\begin{equation}\label{1.04.10}
(T+N)^{k}=T^{k}+k\,T^{k-1}N\text{ on }\ddc.
\end{equation}

This motivates the following state of affair. Given an arbitrary Hilbert space $\kkc$, say\,\footnote{\;Notice now the space may be not be the same as the above hence the notation is different} call an  operator of the form $T+N$ where $T$ is  symmetric and $N$  is such that $N^{2}=0$, and $T$ and $N$ commute on a dense subspace $\ddc$ of $\kkc$, invariant for both and being a core of each of them, a {\em Jordan operator of order $2$};  the case when both of $T$ and $N$ are bounded is considered in \cite{hel}. Furthermore, call the couple $T$ and $N$ a {\em Jordan dilation of} $s$ {\em relative to the decomposition} \eqref{2.26.07} if 
\begin{equation}\label{1.19.10}
s(m,n)=\is{(T+N)^{m}V(1,0)}{(T+N)^{n}V(1,0)}_{\kkc},\quad m,n=0,1,\ldots
\end{equation}
where $\funk V{\ccb^{2}}\ddc$ is an isometry, and
\begin{equation}\label{3.19.10}
\sssm(l,k,\bbsm,\aasm)=\is{T^{k}V\bbsm}{T^{l}V\aasm},\quad k,l\in\nnb,\;\aasm,\bbsm\in\ccb^{2}.
\end{equation}

%
%

\subsection{The basic result}\label{s6}

\begin{thm}\label{t2.27.07}
Given $(s(m,n))_{m,n=0}^{\infty}$. The following conditions are equivalent:
\begin{enumerate}
\item
[(i)] the condition {\rm (h1)} is satisfied and  $\sssm$ defined by \eqref{3.26.07}  {and \eqref{5.19.10}} is a positive definite form;
\item
[(ii)]  the condition {\rm (h1)} is satisfied and there is a Jordan dilation $T$, $N$ of the bisequence $s$ relative to the decomposition \eqref{2.26.07};
\item
[(iii)] $s$ is a Sobolev moment bisequence. 
\end{enumerate}

\end{thm}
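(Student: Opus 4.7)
The plan is to prove the equivalence by the circular chain (iii)$\Rightarrow$(i)$\Rightarrow$(ii)$\Rightarrow$(iii). The first and last arrows are essentially bookkeeping from pieces that are already in place, while the middle implication is where the RKHS machinery of Sections~\ref{s4}--\ref{s5} does the real work.

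For (iii)$\Rightarrow$(i), I would start from a positive definite representing matrix $\boldsymbol{\mu}$ for $s$ and declare the bisequences $s_{i,j}$ via \eqref{5.19.10}. A direct differentiation of $t^{m}t^{n}$ under the integrals in \eqref{1.27.07} shows that these $s_{i,j}$ satisfy \eqref{2.26.07} together with the translation invariance \eqref{1.26.07}, i.e.\ condition (h2) of Theorem~\ref{decomp}; the theorem then hands over (h1), and Proposition~\ref{r1} delivers positive definiteness of the associated form $\sssm$.

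For (i)$\Rightarrow$(ii), Theorem~\ref{decomp} first produces the translation-invariant form $\sssm$ satisfying \eqref{sands}, and positive definiteness authorises the RKHS construction of Section~\ref{s4}. I would set $V\aasm\okr\sssm_{(0,\aasm)}$, take $T\okr\varPhi(1)$, and retain $N$ as defined in Section~\ref{s5}. The semigroup property of $\varPhi$ gives $T^{k}V\aasm=\sssm_{(k,\aasm)}$, so the reproducing formula \eqref{2.03.10} at once yields \eqref{3.19.10}; expanding $(T+N)^{m}$ via the Newton-type identity \eqref{1.04.10} and combining with \eqref{sands} delivers \eqref{1.19.10}. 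A minor matter is that $V$ is strictly isometric only when the Grammian $(\sssm(0,0,\eesm_{i},\eesm_{j}))_{i,j=0}^{1}$ equals the $2\times 2$ identity; otherwise a preliminary rescaling of the data (or an ancillary enlargement of $\ddc$) fixes this without disturbing the moments.

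The real obstacle lies in (ii)$\Rightarrow$(iii). Given a Jordan dilation $(T,N,V)$ acting on $\kkc$, I would extend the symmetric $T$ to a selfadjoint operator $\widetilde{T}$ on some larger Hilbert space $\widetilde{\kkc}\supseteq\kkc$, denote by $E$ its spectral measure, and set $u_{0}\okr V\eesm_{0}$ and $u_{1}\okr NV\eesm_{0}$. Putting
\[
\mu_{i,j}(\rho)\okr\is{E(\rho)u_{i}}{u_{j}}_{\widetilde{\kkc}},\quad i,j=0,1,
\]
produces a positive definite $2\times 2$ matrix of measures, because $\sum_{i,j}a_{i}\bar a_{j}\mu_{i,j}(\rho)=\|E(\rho)(a_{0}u_{0}+a_{1}u_{1})\|^{2}\geq 0$. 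Expanding $(T+N)^{m}=T^{m}+mT^{m-1}N$ via \eqref{1.04.10} inside \eqref{1.19.10} and invoking $\widetilde{T}^{k}=\int t^{k}\,dE(t)$, one matches the four summands against the four $(i,j)$-entries of \eqref{1.27.07} by a routine check. The genuinely delicate point is the selfadjoint extension: since $T$ is only assumed symmetric on the invariant dense domain $\ddc$, one must invoke a Naimark- or Stochel-type dilation, and this is where most of the technical effort would have to be concentrated.
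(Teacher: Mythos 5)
Your proposal is correct and follows essentially the same route as the paper: the cycle (iii)$\Rightarrow$(i)$\Rightarrow$(ii)$\Rightarrow$(iii), with Proposition~\ref{r1} and Theorem~\ref{decomp} handling the first arrow, the RKHS operators $T=\varPhi(1)$ and $N$ together with the binomial identity \eqref{1.04.10} handling the second, and a selfadjoint extension of $T$ compressed through its spectral measure handling the third. Your two deviations are cosmetic rather than substantive: the vector $u_{1}\okr NV\eesm_{0}$ coincides with the paper's $V\eesm_{1}$ in the canonical RKHS model, and the caveat you raise about $V$ failing to be a strict isometry unless $(\sssm(0,0,\eesm_{i},\eesm_{j}))_{i,j=0}^{1}$ is the identity is a legitimate normalisation point that the paper itself glosses over.
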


\begin{proof}
Most of the arguments have been already presented in the constructions above. Let us put them together.

(i)\wynik(ii). Use the constructions done in sections \ref{s4} and \ref{s5} maintaining the notation applied there. 
Consider the standard embedding
$$\funkc V{\ccb^{2}}\aasm{\sssm_{0,\aasm}}\hhc.
$$  
It is a matter of straightforward though lengthy verification that
\begin{equation}\label{1.26.09}
{s(m,n)}=\is{(T+N)^{m}V(1,0)}{(T+N)^{n}V(1,0)}_{\hhc},\quad m,n=0,1\ldots
\end{equation}
which is nothing but \eqref{1.19.10} under the specific circumstances considered there.

\noindent Indeed, due to \eqref{1.04.10}, \eqref{2.26.07} and \eqref{2.03.10},
\begin{multline*}
\is{(T+N)^{m}V(1,0)}{(T+N)^{n}V(1,0)}
\\
=\is{(T^{m}+mT^{m-1}N)V(1,0)}{(T^{m}+mT^{m-1}N)V(1,0)}
\\
=\is{T^{m}\sssm_{(0,\eesm_{0})}}{T^{n}\sssm_{(0,\eesm_{0})}}+n\is{T^{m}\sssm_{(0,\eesm_{0})}}{T^{n-1}\sssm_{(0,\eesm_{1})}}+m\is{T^{m-1}\sssm_{(0,\eesm_{0})}}{T^{n}\sssm_{(0,\eesm_{1})}}
\\
+mn\is{T^{m-1}\sssm_{0,\eesm_{1}}}{T^{n-1}\sssm_{0,\eesm_{1}}}
\\
=\is{\sssm_{(m,\eesm_{0})}}{\sssm_{(n,\eesm_{0})}}+n\is{\sssm_{(m,\eesm_{0})}}{\sssm_{(n-1,\eesm_{1})}}+m\is{\sssm_{(n-1,\eesm_{0})}}{\sssm_{(0,\eesm_{1})}}
+mn\is{\sssm_{m-1,\eesm_{1}}}{\sssm_{n-1,\eesm_{1}}}
\\
=\sssm(n,m,\eesm_{0},\eesm_{0})+n\sssm(n-1,m,\eesm_{1},\eesm_{0})+m\sssm(n,m-1,\eesm_{0},\eesm_{1})+mn\sssm(n-1,m-1,\eesm_{1},\eesm_{1})\\
=s_{0,0}(m,n)+ms_{0,1}(m-1,n)+ns_{1,0}(m,n-1)+mns_{1,1}(m-1,n-1)
=s(m,n).
\end{multline*}

(ii)\wynik(iii).
Because $V^{*}$ is a projection onto a $2$-dimensional space $T$ can be represented by $2\times 2$-matrix valued semispectral measure $\boldsymbol{\mu}$ (which is a compression\,\footnote{\;This is when the formula \eqref{1.29.10} is read ``right-to-left'' with $R$ being an isometry.} to the $2$-dimensional space of a spectral measure of \underbar{any} selfadjoint extension\,\footnote{\;Selfadjoint extensions may be quite diverse, often  not necessarily unitary equivalent, cf. \cite{naj} and also \cite{sz_naj}. In our case apparently they can be obtained as von Neumann extensions, as those acting in the same space, and also as Na\u \i mark extension, which go beyond the space. The latter applies also to the case of equal deficiency indices. In this way we may get plenty of representing measures $\mathbf{\mu}$,  cf. \cite{naj,sz_naj}. } of $T$) as 
\begin{equation}\label{3.04.10}
\sssm(l,k,\bbsm,\aasm)=\is{T^{k}V\bbsm}{T^{l}V\aasm}=\int\nolimits_{\rrb}t^{k+l}\is{\boldsymbol{\mu}(\D t)\bbsm}{\aasm}.
\end{equation}  

Because $V\eesm_{i}\perp V\eesm_{j}$ ($V$ is an isometry!) if ${ i}\neq{j}$ and $\lin(\sssm_{(n,\bbsm)})_{(n,\bbsm)}$ is invariant for $T$,    we have the decomposition
\begin{equation}\label{4.04.10}
\begin{pmatrix}\; \mu_{0,0}(\,\cdot\,)&\mu_{0,1}(\,\cdot\,)\;\\
\;\mu_{1,0}(\,\cdot\,)&\mu_{1,1}(\,\cdot\,)\;
\end{pmatrix}
\okr
\begin{pmatrix}\; \is{\boldsymbol{\mu}(\,\cdot\,){\eesm_{ 0}}}{{\eesm_{ 0}}}&\is{\boldsymbol{\mu}(\,\cdot\,){\eesm_{0}}}{{\eesm_{1}}}\;\\
\;\is{\boldsymbol{\mu}(\,\cdot\,){\eesm_{1}}}{{\eesm_{0}}}&\is{\mu(\,\cdot\,){\eesm_{1}}}{{\eesm_{1}}}\;
\end{pmatrix}
\end{equation}
Again, with some effort,
\begin{multline*}
\is{(T+N)^{m}V(1,0)}{(T+N)^{n}V(1,0)}
\\
=\is{T^{m}\sssm_{(0,\eesm_{0})}}{T^{n}\sssm_{(0,\eesm_{0})}}+n\is{T^{m}\sssm_{(0,\eesm_{0})}}{T^{n-1}\sssm_{(0,\eesm_{1})}}+m\is{T^{m-1}\sssm_{(0,\eesm_{0})}}{T^{n}\sssm_{(0,\eesm_{1})}}
\\
+mn\is{T^{m-1}\sssm_{0,\eesm_{1}}}{T^{n-1}\sssm_{0,\eesm_{1}}}
\\
=\sssm(n,m,\eesm_{0},\eesm_{0})+n\sssm(n-1,m,\eesm_{1},\eesm_{0})+m\sssm(m-1,n,\eesm_{0},\eesm_{1})+mn\sssm(m-1,n-1,\eesm_{1},\eesm_{1})
\\
\stackrel{\eqref{3.04.10}\eqref{4.04.10}}{=}\int_{\rrb}t^{m}t^{n}\mu_{0,0}(\D t)+n\int _{\rrb}t^{m}t^{n-1}\mu_{0,1}(\D t)+m\int _{\rrb}t^{m-1}t^{n}\mu_{1,0}(\D t)
\\
+mn\int_{\rrb}t^{m-1}t^{n-1}\mu_{1,1}(\D t)
\end{multline*}
which confronted with \eqref{1.26.09} establishes
$${s(m,n)}={
\sum\nolimits_{i,j=0}^{1}\int\nolimits_{\rrb}{t^{m}}{\raisebox{3.5pt}{$^{(i)}$}}{t^{n}}{\raisebox{3.5pt}{$^{(j)}$}}
\mu_{i,j}(\D t),\quad m,n=0,1,\ldots}$$
(iii)\wynik(i). This statement was proved as Proposition \ref{r1}.
\end{proof}

\subsection{Compactly supported representing measures $\boldsymbol{\mu}$}\label{scomp}
For a measure $\boldsymbol \mu$ such that \eqref{3.04.10} holds to be compactly supported it is necessary and sufficient the operator $T$ therein to be bounded. For that there is a number of equivalent conditions  \underbar{in} \underbar{terms} \underbar{of} \underbar{$\sssm$}}, which are applicable here due to the fact that $\sssm$ is a translationally invariant form in the sense of \eqref{2.15.10} or \eqref{4.17.10}. The conditions are listed in \cite[Lemma 2]{wuj2} and are originated in \cite{bull,pams,ark}. Let us pick up some of them explicitly.
\begin{pro}
The following conditions are equivalent to the boundedness of $T$
\begin{enumerate}
\item[$\boldsymbol\bullet$]
for every $k=0,1,\ldots$ there is $d(k)$ such that
$$
\sssm(m+k,m+k,\aasm,\aasm)\Le d(k)\sssm(m,m,\aasm,\aasm),\;m\in\nnb,\,\aasm\in\ccb^{2};
$$
\item
[$\boldsymbol\bullet$]
for every $k=0,1,\ldots$ the is a function $\alpha\colon\nnb\to[0,+\infty)$ such that $\alpha(k+l)\Le\alpha(k)\alpha(l)$, $k,l=0,1,\dots$ for which
$$
|\sssm(m,m,\aasm,\aasm)|\Le c(\aasm)\alpha(m),\;m\in\nnb,\,\aasm\in\ccb^{2};
$$
\item
[$\boldsymbol\bullet$]
For any $k=0,1,\ldots$ and a finite choice of $m_{i}$'s and $\aasm_{i}$'s
$$
\liminf_{l\to +\infty} \sum_{i,j}\sssm(m_{i}+lk,m_{j}+kl,\aasm_{i},\aasm_{j})^{2^{-l}}<+\infty
$$
\end{enumerate}
\end{pro}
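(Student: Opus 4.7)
The plan is to translate each of the three bullets into a statement about the operator $T$ acting on the dense subspace $\ddc\subset\hhc$ via the identity
\[
\sssm(l,k,\bbsm,\aasm)=\is{T^{k}V\bbsm}{T^{l}V\aasm}_{\hhc}
\]
from \eqref{3.19.10}, which in particular specialises to $\sssm(m,m,\aasm,\aasm)=\|T^{m}V\aasm\|^{2}$. The vectors $T^{m}V\aasm=\sssm_{(m,\aasm)}$ span $\ddc$, so any universally quantified statement about such norms is equivalent to the corresponding statement on all of $\ddc$. Translation invariance \eqref{4.17.10} is what lets a shifted expression $\sssm(m_{i}+lk,m_{j}+lk,\aasm_{i},\aasm_{j})$ be rearranged as $\is{T^{lk}\sssm_{(m_{j},\aasm_{j})}}{T^{lk}\sssm_{(m_{i},\aasm_{i})}}_{\hhc}$ and hence a full sum of such terms collapse to $\|T^{lk}\eta\|^{2}$ with $\eta\okr\sum_{i}\sssm_{(m_{i},\aasm_{i})}$.

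First I would handle the forward implications, which are immediate. Assuming $\|T\|=c<\infty$, the first bullet holds with $d(k)=c^{2k}$, the second with $\alpha(m)=c^{2m}$ and $c(\aasm)=\|V\aasm\|^{2}$, and for the third the rewriting just noted yields $\sum_{i,j}\sssm(m_{i}+lk,m_{j}+lk,\aasm_{i},\aasm_{j})=\|T^{lk}\eta\|^{2}\le c^{2lk}\|\eta\|^{2}$, whose $2^{-l}$-th power tends to $1$ as $l\to\infty$.

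For the reverse implications each condition is in fact a known characterisation of boundedness of the operator canonically associated with a translation invariant positive definite form; these are catalogued in \cite[Lemma 2]{wuj2} and originate in \cite{bull,pams,ark}. Concretely, the first bullet says that $T^{k}$ is bounded on the core $\ddc$ with constant $d(k)^{1/2}$, whence $T$ is bounded; the second is the classical submultiplicative-majorant criterion on the additive semigroup $\nnb$; and the third is a Carleman-type \emph{liminf} that, via the norm identification of the previous paragraph, forces $\liminf_{l}\|T^{lk}\eta\|^{2^{1-l}}<\infty$ on a dense set and hence finite spectral radius on $\ddc$. Once these translations are in place, \cite[Lemma 2]{wuj2} applies directly.

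The step requiring most care, and the one I expect to be the main obstacle, is the verification that the cited scalar-valued characterisations transfer to the form-valued setting without loss. For fixed $\aasm$ the diagonal kernel $\sssm(\,\cdot\,,\,\cdot\,,\aasm,\aasm)$ is itself a translation invariant positive definite kernel on $\nnb$, and its canonically associated operator is the compression of $T$ to $\clolin\{T^{m}V\aasm:m\in\nnb\}$; boundedness of $T$ is equivalent to the uniform boundedness of this family of compressions in $\aasm$, and this uniformity is exactly what the quantifiers "for every $\aasm\in\ccb^{2}$" in the three bullets encode. Once this bookkeeping is performed, the substance of the proposition is contained in \cite[Lemma 2]{wuj2}.
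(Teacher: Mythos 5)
Your proposal is correct and follows essentially the same route as the paper, which offers no proof of this proposition at all beyond the remark preceding it: that $\sssm$ is a translation invariant positive definite form, so the boundedness criteria of \cite[Lemma 2]{wuj2} (originating in \cite{bull,pams,ark}) apply verbatim to the operator $T$ it generates. Your added dictionary --- $\sssm(m,m,\aasm,\aasm)=\|T^{m}V\aasm\|^{2}$, the collapse of the double sum in the third bullet to $\|T^{lk}\eta\|^{2}$, and the easy forward implications --- is accurate and merely makes explicit what the paper leaves to the cited lemma.
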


\subsection{Defect indices of $T$.}

The following statement is true.

\begin{thm}
The operator $T$, defined as in Section \ref{s5}, has equal defect indices, not larger than 2.
\end{thm}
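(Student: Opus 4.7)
My plan splits the statement into the inequality $n_{+},n_{-}\le 2$ (where $n_{\pm}\okr\dim\ker(T^{*}\mp iI)$) and the equality $n_{+}=n_{-}$. The first I would deduce from a cyclicity argument based on the two generators $v_{j}\okr\sssm_{(0,\eesm_{j})}$, $j=0,1$; the second from exhibiting a self-adjoint extension of the closure of $T$ inside $\hhc$, obtained by reducing a Na\u\i mark dilation of the matrix measure $\boldsymbol{\mu}$ attached to $\sssm$.

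For the upper bound, observe that $T^{n}v_{j}=\sssm_{(n,\eesm_{j})}$ and, by antilinearity of $\sssm_{(n,\cdot)}$ in its second argument, $\sssm_{(n,\bbsm)}=\sbar b_{0}T^{n}v_{0}+\sbar b_{1}T^{n}v_{1}$ for $\bbsm=(b_{0},b_{1})$; hence $\ddc=\lin\{T^{n}v_{j}:n\in\nnb,\,j=0,1\}$. Given $\lambda\in\ccb$, the identity $T^{n}v_{j}=(T-\lambda I)T^{n-1}v_{j}+\lambda T^{n-1}v_{j}$ yields by induction on $n$ the inclusion $T^{n}v_{j}\in(T-\lambda I)\ddc+\lin\{v_{0},v_{1}\}$, whence $(T-\lambda I)\ddc+\lin\{v_{0},v_{1}\}=\ddc$. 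Taking closures in $\hhc$ and using that the sum of a closed subspace with a finite dimensional one is closed, $\overline{\ran(T-\lambda I)}+\lin\{v_{0},v_{1}\}=\hhc$; thus $\dim\ker(T^{*}-\sbar\lambda I)$, which equals the codimension of $\overline{\ran(T-\lambda I)}$ in $\hhc$, is at most $2$. Specialising to $\lambda=\pm i$ delivers $n_{\mp}\le 2$.

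For the equality, I would use the matrix measure $\boldsymbol{\mu}$ from Theorem~\ref{t2.27.07}(iii) together with its Na\u\i mark dilation given by Theorem~\ref{t1.29.10}: a Hilbert space $\kkc$, a bounded $R\colon\ccb^{2}\to\kkc$ and a spectral measure $E$ on $\rrb$ with $\boldsymbol{\mu}(\,\cdot\,)=R^{*}E(\,\cdot\,)R$. Let $A\okr\int_{\rrb}t\,dE(t)$ be the associated self-adjoint operator on $\kkc$ and set
$$
\iota\sssm_{(n,\bbsm)}\okr A^{n}R\sbar\bbsm,\qquad n\in\nnb,\ \bbsm\in\ccb^{2},
$$
extended linearly to $\ddc$. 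The spectral calculus gives $\langle A^{m}R\sbar\aasm,A^{n}R\sbar\bbsm\rangle_{\kkc}=\int_{\rrb}t^{m+n}\langle d\boldsymbol{\mu}\sbar\aasm,\sbar\bbsm\rangle_{\ccb^{2}}$, and unpacking the right hand side through \eqref{5.19.10} reproduces $\sssm(n,m,\bbsm,\aasm)=\langle\sssm_{(m,\aasm)},\sssm_{(n,\bbsm)}\rangle_{\hhc}$, so $\iota$ is an isometry. By construction $\iota T=A\iota$ on $\ddc$, so $\widetilde\hhc\okr\overline{\iota(\ddc)}$ is $A$-invariant and, $A$ being self-adjoint on $\kkc$, in fact reduces $A$. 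Consequently $A|_{\widetilde\hhc}$ is self-adjoint on $\widetilde\hhc$, and pulled back through $\iota$ it furnishes a self-adjoint extension of the closure of $T$ in $\hhc$; by von Neumann's standard criterion $n_{+}=n_{-}$.

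The hardest step is the verification that $\iota$ is isometric; the conjugate $\sbar\bbsm$ in its definition is put there precisely to reconcile the antilinearity of $\sssm_{(n,\cdot)}$ in its second variable with the right antilinear convention of the $\ccb^{2}$-inner product used in \eqref{1.29.10}, and is the only place where bookkeeping has to be done with care.
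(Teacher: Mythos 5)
Your upper bound $n_{\pm}\Le 2$ is correct and is essentially the paper's own argument in dual form: the paper takes $f\in\ker(T^{*}-z)$ and, using $\sssm_{(n,\aasm)}=\sbar a_{0}\sssm_{(n,\eesm_{0})}+\sbar a_{1}\sssm_{(n,\eesm_{1})}$ together with the reproducing property and $T^{n}\sssm_{(0,\aasm)}=\sssm_{(n,\aasm)}$, computes $f(n,\aasm)=a_{0}\bar z^{n}f(0,\eesm_{0})+a_{1}\bar z^{n}f(0,\eesm_{1})$, so the deficiency space has dimension at most $2$; your codimension count for $\overline{\operatorname{ran}(T-\lambda)}$ rests on exactly the same two-generator fact and is equally valid.

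The equality $n_{+}=n_{-}$ is where your proposal has a genuine gap. You conclude that $\widetilde{\hhc}\okr\overline{\iota(\ddc)}$ reduces the self-adjoint operator $A$ from the fact that $\iota(\ddc)$ is dense in $\widetilde{\hhc}$ and invariant under $A$. For an \emph{unbounded} self-adjoint operator this implication is false: reduction requires the spectral projections $E(\rho)$ to leave $\widetilde{\hhc}$ invariant, and invariance of a dense subspace under $A$ itself does not give that. The scalar prototype already defeats the step: let $\mu$ be an indeterminate Hamburger representing measure that is not N-extremal, $A$ multiplication by $t$ on $\kkc=L^{2}(\mu)$, and $\iota(\ddc)=\ccb[t]$. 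Then $\ccb[t]$ is $A$-invariant while $\widetilde{\hhc}=\overline{\ccb[t]}\subsetneq L^{2}(\mu)$, and $\widetilde{\hhc}$ cannot reduce $A$: that would force $\overline{\lin\zb{E(\rho)1}{\rho\text{ Borel}}}\sbs\overline{\ccb[t]}$, whereas the left-hand side is all of $L^{2}(\mu)$ because simple functions are dense there. So your construction does not produce a self-adjoint extension of $T$ inside $\hhc$, and without one von Neumann's extension criterion gives nothing. (A lesser point: this half also invokes the existence of a representing $\boldsymbol{\mu}$, i.e.\ the implication (i)$\Rightarrow$(iii) of Theorem \ref{t2.27.07}; that is available and not circular, but it is much heavier machinery than the statement needs.) The paper gets $n_{+}=n_{-}$ by the other standard route: it defines an antilinear, norm-preserving involution $J$ on $\hhc$ by conjugating the coefficients of $\sum_{n,\aasm}\alpha_{n,\aasm}\sssm_{(n,\aasm)}$ and checks $JT=TJ$, after which von Neumann's conjugation criterion yields equality of the defect indices at once; the only point requiring care there is that $J$ is well defined and isometric on $\ddc$, which is where the translation invariance of $\sssm$ enters. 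I would replace the second half of your argument by a conjugation of this kind.
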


\begin{proof}
We define a conjugation (anti-linear) operator $J$ on $\mathcal{D}$ by
$$
J\left(\sum_{n,\aasm} \alpha_{n,\aasm} \sssm_{n,\aasm}\right)=\sum_{n,\aasm} \sbar\alpha_{n,\aasm} \sssm_{n,\aasm}.
$$
A standard reproducing kernel Hilbert space argument shows that $J$ is properly defined, preserves the norm and extends uniquely to the whole space $\mathcal{H}$. Furthermore, $JT=TJ$ and hence, by the von Neumann theorem,
the defect indices of $T$ are equal. 

Now let us take any element  $f\in\mathcal{N}(T^*-z)$, with $\text{Im} z>0$. Note that 
$$
\sssm_{(n,\aasm)}=\sbar a_0 \sssm_{(n,\eesm_0)} + \sbar a_1 \sssm_{(n,\eesm_1)}, \quad \aasm=(a_0,a_1),
$$
which can be easily checked by taking the inner product of the both sides with an arbitrary kernel function $s_{(m,b)}$ and using the fact that $\sssm$ is a form. Consequently, 
\begin{eqnarray*}
f(n,\aasm)&=&\is{ f}{\sssm_{(n,\aasm)}} = \is{ f}{T^n \sssm_{(0,\aasm)}}=\is{ \bar z ^n f}{ \sssm_{(0,\aasm)}}\\
&=&a_0 \is{ \bar z ^n f}{ \sssm_{(0,\eesm_0)}}+ a_1 \is{ \bar z ^n f}{ \sssm_{(0,\eesm_1)}}\\
&=& a_0\bar z ^n   f(0,\eesm_0)+ a_1\bar z ^n f(0,\eesm_1).
\end{eqnarray*}
This means that the kernel $\mathcal{N}(T^*-z)$ is of dimension at most two.
\end{proof}

We present now an example, where the defect indices of $T$ are indeed $(2,2)$. 

\begin{exa}
Consider an indeterminate measure $\mu$ on the real line, set $\mu_{i,j}=\delta_{i,j} \mu$ and define the form $\sssm$ by \eqref{formdef}. Then, 
$$
\is{\sssm_{(m,\eesm_0)}}{\sssm_{(n,\eesm_1)}}=\sssm(m,n,\eesm_0,\eesm_1)=\int t^{m+n} d\mu_{0,1}=0,\quad m,n\in\mathbb{N}.
$$
Hence, the RKHS $\mathcal{H}$ decomposes naturally as a direct sum $\mathcal{H}_0\oplus\mathcal{H}_1$ and the operator $T$ is accordingly a direct sum of two operators, both having defect indices $(1,1)$, due to the indeterminacy of $\mu$. Hence, the defect indices of $T$ are in this example precisely $(2,2)$.
\end{exa}

\subsection{Boundedness of the operator $N$.}

\begin{thm}
Suppose one of the equivalent conditions of Theorem \ref{t2.27.07} holds. Then the operator $N$ is bounded if and only if for some $\alpha>0$ the kernel  
\begin{equation}\label{piatek}
\begin{gathered}
{\ttsm_\alpha(m,n,\aasm,\bbsm)}{\okr}  a_{0}\bar b_{0}(\alpha s_{0,0}(m,n) -  s_{1,1}(m,n))\\
+\alpha a_{0}\bar b_{1}s_{0,1}(m,n)
+\alpha a_{1}\bar b_{0}s_{0,1}(m,n)+\alpha a_{1}\bar b_{1}s_{1,1}(m,n)
\end{gathered}
\end{equation}
is a positive definite form on $\nnb^{2}\times\ccb^{4}$, cf. \eqref{2.27.07}. 
\end{thm}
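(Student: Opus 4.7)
The plan is to recognize $\ttsm_\alpha$ as the defect kernel of $N$ inside the reproducing kernel Hilbert space $\hhc$ constructed in Section~\ref{s4}. Since $N$ is (densely) defined on $\ddc$, its boundedness is equivalent to the existence of $\alpha>0$ with
\[
\alpha\|f\|_{\hhc}^{2}-\|Nf\|_{\hhc}^{2}\Ge 0,\quad f\in\ddc,
\]
the smallest admissible $\alpha$ being $\|N\|^{2}$. Because $\ddc$ is spanned by the kernel functions $\sssm_{(n,\bbsm)}$ and the defect form is sesquilinear, this inequality is equivalent to positive semidefiniteness of the kernel
\[
K_{\alpha}\bigl((m,\aasm),(n,\bbsm)\bigr):=\alpha\is{\sssm_{(n,\bbsm)}}{\sssm_{(m,\aasm)}}_{\hhc}-\is{N\sssm_{(n,\bbsm)}}{N\sssm_{(m,\aasm)}}_{\hhc}
\]
on $\nnb\times\ccb^{2}$; after absorbing arbitrary scalars into the $\ccb^{2}$-entries via Hermitian linearity, this is exactly condition \eqref{2.27.07} applied to the form that represents $K_{\alpha}$.

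Next I would compute the two ingredients explicitly on pairs of generators. The reproducing property \eqref{2.03.10} gives $\is{\sssm_{(n,\bbsm)}}{\sssm_{(m,\aasm)}}=\sssm(m,n,\aasm,\bbsm)$, while the defining rule $N\sssm_{(n,\bbsm)}=\sbar b_{0}\sssm_{(n,\eesm_{1})}$, applied in both slots, yields
\[
\is{N\sssm_{(n,\bbsm)}}{N\sssm_{(m,\aasm)}}=a_{0}\bar b_{0}\,\sssm(m,n,\eesm_{1},\eesm_{1})=a_{0}\bar b_{0}\,s_{1,1}(m,n),
\]
the last equality following from \eqref{3.26.07} specialized to $\aasm=\bbsm=\eesm_{1}$.

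Finally, inserting the decomposition \eqref{3.26.07} of $\sssm(m,n,\aasm,\bbsm)$ into $K_{\alpha}$ and subtracting $a_{0}\bar b_{0}\,s_{1,1}(m,n)$ delivers exactly the form $\ttsm_\alpha(m,n,\aasm,\bbsm)$ displayed in \eqref{piatek}. Hence positive definiteness of $\ttsm_\alpha$ is equivalent to $\|N\|^{2}\Le\alpha$, which proves both implications. There is no deep obstacle here: the whole argument is a clean dictionary between operator theory in a RKHS and the positivity of defect kernels/forms. The only piece of bookkeeping to keep straight is the passage between a positive definite kernel on $\nnb\times\ccb^{2}$ and a positive definite form on $\nnb^{2}\times\ccb^{4}$ in the sense of \eqref{2.27.07}, which is routine by the Hermitian linearity of $\sssm$, and hence of $\ttsm_\alpha$, in its $\ccb^{2}$-arguments.
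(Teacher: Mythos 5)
Your proposal is correct and follows essentially the same route as the paper: both reduce boundedness of $N$ on $\ddc$ to the inequality $\|Nf\|^{2}\Le\alpha\|f\|^{2}$ for finite combinations of kernel functions, compute $\is{N\sssm_{(n,\bbsm)}}{N\sssm_{(m,\aasm)}}=a_{0}\sbar b{_0}\,s_{1,1}(m,n)$ from the defining rule for $N$, and identify the resulting defect kernel with $\ttsm_{\alpha}$ via \eqref{3.26.07}. The only cosmetic discrepancy is that your computation produces $s_{1,0}$ in the $a_{1}\sbar b{_0}$ term where \eqref{piatek} prints $s_{0,1}$, which is a typo in the statement rather than a gap in your argument.
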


\begin{proof}
The operator $N$ is bounded if and only if
$$
\langle Nf, Nf \rangle \leq \alpha \langle f,f\rangle, \quad f=\sum_k  \sssm_{(m_k),\aasm_k}\in\mathcal D
$$
for some $\alpha>0$. This in turn is equivalent to 
$$
\alpha\sum_{k,l} \langle \sssm_{(m_k,\aasm_k)} , \sssm_{(m_l,\aasm_l)} \rangle - \sum_{k,l} a_{0,k} \bar a_{0,l} \langle  \sssm_{(m_k,e_1)} \sssm_{(m_l,e_1)}  \rangle \geq 0
$$
and the left hand side of the above inequality can be written as
$$
\sum_{k,l} \left( \alpha \sssm({m_l,m_k,\aasm_l,\aasm_k}) - a_{0,k} \bar a_{0,l}  \sssm({m_l,m_k,e_1,e_1})\right)=\sum_{k,l} \ttsm_\alpha({m_l,m_k,\aasm_l,\aasm_k}). 
$$
\end{proof}
\begin{rem}\label{t2.04.02}
If 
$$
s_{0,0}(m,n)=s_{1,1}(m,n) \text{ and }s_{0,1}(m,n)=s_{1,0}(m,n)=0,\; m,n=0,1,\ldots
$$
then the form defined in \eqref{piatek} with $\alpha=1$ is obviously positive definite. Hence $N$ is always bounded in this case; confront this with Example \ref{t3.04.02}.
\end{rem}

\begin{cor}\label{t1.04.02}
A necessary condition for boundedness of  $N$ is that 
\begin{equation}\label{poniedzialek1}
\sup_{n\geq 0} \frac{\int t^{2n} d\mu_{11} }{\int t^{2n} d\mu_{00}} < +\infty.
\end{equation}
\end{cor}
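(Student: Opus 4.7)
The plan is straightforward: invoke the characterization in the preceding theorem, which says $N$ is bounded iff the form $\ttsm_\alpha$ defined by \eqref{piatek} is positive definite for some $\alpha>0$. Positive definiteness of $\ttsm_\alpha$ means that for every finite choice of pairs $(m_k,\aasm_k)$ the quadratic sum $\sum_{k,l}\ttsm_\alpha(m_k,m_l,\aasm_k,\aasm_l)$ is nonnegative, so in particular one may test on a very simple single-term configuration to extract a scalar inequality.

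Specifically, I would pick a single term with index $n\in\nnb$ and $\aasm_1=\eesm_0=(1,0)$. Then three of the four summands in the definition \eqref{piatek} drop out and the positive definiteness of $\ttsm_\alpha$ collapses to
$$
\ttsm_\alpha(n,n,\eesm_0,\eesm_0)=\alpha\,s_{0,0}(n,n)-s_{1,1}(n,n)\Ge0.
$$
Invoking the definition \eqref{5.19.10}, which gives $s_{i,i}(n,n)=\int t^{2n}\,d\mu_{i,i}$, this reads
$$
\alpha\int t^{2n}\,d\mu_{0,0}\Ge \int t^{2n}\,d\mu_{1,1},\quad n=0,1,\ldots
$$
Since $\mu_{0,0}$ and $\mu_{1,1}$ are positive measures (see the discussion following \eqref{1.27.07}) and $\alpha$ is fixed independently of $n$, dividing through yields the asserted bound \eqref{poniedzialek1} with the supremum $\Le\alpha$.

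The only subtlety to address is the possible vanishing of the denominators $\int t^{2n}\,d\mu_{0,0}$. If $\mu_{0,0}(\rrb)=0$, then positive definiteness of the $2\times2$ matrix $(\mu_{i,j}(\rho))$ at every Borel $\rho$ forces the off-diagonal entries to vanish, and in that degenerate subcase the inequality displayed above forces $\int t^{2n}\,d\mu_{1,1}=0$ for all $n$, so the convention $0/0=0$ keeps the conclusion meaningful. In the non-degenerate case $\mu_{0,0}\not\equiv0$, all even moments $\int t^{2n}\,d\mu_{0,0}$ are strictly positive and no such care is needed. I expect essentially no obstacle here; the content is a direct application of the previous theorem specialised to the coordinate vector $\eesm_0$.
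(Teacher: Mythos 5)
Your proof is correct and is essentially the paper's own argument: both reduce to the single scalar inequality $\alpha\int t^{2n}\,d\mu_{0,0}\Ge\int t^{2n}\,d\mu_{1,1}$ by specializing the boundedness of $N$ to the kernel sections $\sssm_{(n,\eesm_0)}$ --- the paper computes $\|N\sssm_{(n,\aasm)}\|^{2}$ and $\|\sssm_{(n,\aasm)}\|^{2}$ explicitly and sets $a_1=0$, which is the same computation as your one-term test of $\ttsm_\alpha$ at $\eesm_0$. One nitpick: even when $\mu_{0,0}\not\equiv0$ its higher even moments can vanish (e.g.\ for a point mass at the origin), so your claim of strict positivity in the non-degenerate case is not quite right, but the displayed inequality together with your $0/0$ convention already covers that situation, so nothing is lost.
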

\begin{proof}
Indeed, note that
\begin{equation}\label{poniedzialek2}
\| N \sssm_{(n,\aasm)} \|^2= |a_0|^2 \| \sssm_{\eesm_1,m} \|^2 = |a_0|^2  \int t^{2m} \D\mu_{11}
\end{equation}
and
\begin{equation}\label{poniedzialek3}
\| \sssm_{(n,\aasm)} \|^2 = |a_0|^2 \int t^{2m} \D \mu_{00} + 2 \text{Re}\left( a_0 \bar a_1 \int t^{2m}\D \mu_{01}\right)+  |a_1|^2  \int t^{2m} \D\mu_{11}.
\end{equation}
Hence, if $N$ is bounded then 
 setting $a_1=0$ we get by \eqref{poniedzialek2} and \eqref{poniedzialek3} that \eqref{poniedzialek1} is satisfied.

\end{proof}

We present an example, showing that $N$ is not automatically bounded even in the case when the off-diagonal measures $\mu_{01}$ and $\mu_{10}$ are zero. 

\begin{exa}\label{t3.04.02}   
Let $\mu_{01}$ and $\mu_{10}$ be zero measures and let $\D\mu_{11}(t)=t^{2k} \D \mu_{00}(t)$ with some $k\geq 0$.
Then it is a matter of straightforward verification that $\sssm_{m,\bbsm}\in\mathcal D(N^*)$ for all $m\in\mathbb N$ and $\bbsm\in\mathbb C^2$ and
$$
N^* \sssm_{(m,\bbsm)} = b_1 \sssm_{(e_0,m+2k)}.
$$
If now $\mu_{00}$ is the Gaussian measure with variance one, then
$$
\frac{\int t^{2n} \D \mu_{11}}{ \int t^{2n} \D \mu_{11}}= \frac{\int t^{2n+2k} \D \mu_{00}}{ \int t^{2n} \D \mu_{00}}=\frac{(2n+2k-1)!!}{(2n-1)!!}\to\infty\quad (n\to\infty).
$$
Hence, the necessary condition \eqref{poniedzialek1} is violated and consequently $N$ must not  be bounded\,\footnote{\;Unbounded nilpotent appear in \cite{ota}.}, although $\mathcal D\subseteq\mathcal D({N^*})$. 
\end{exa}

The above, when compared with what is in Section \ref{scomp}, shows that boundedness of $T$ seem to have very little in common with that of $N$.

\subsection{Positive definiteness once more -- an open question}
First let us note the following fact.

\begin{pro}\label{t1.27.07}
If the bisequence $(s(m,n))_{m,n=0}^{\infty}$ and the form $\sssm$ are in the relation \eqref{sands} and $\sssm$ is a positive definite form, then  $s$  is positive definite, i.e.
\begin{equation}\label{1.28.09}
\sum\nolimits_{m,n}\lambda_{m}\sbar \lambda_{n}s(m,n)\Ge0,\quad (\lambda_{m,n})_{n,n} \text{ of finite length.}
\end{equation}
\end{pro}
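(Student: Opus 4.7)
The plan is to realise $s(m,n)$ as a Gram matrix $\is{g_{n}}{g_{m}}$ in the RKHS $\hhc$ associated with $\sssm$ (as built in Section \ref{s4}), and then read off the inequality \eqref{1.28.09} as the squared norm of an explicit vector. Positive definiteness of $\sssm$ is exactly the hypothesis needed to make \eqref{2.03.10} well-define a genuine inner product on $\ddc$; note that no translation invariance of $\sssm$ enters, so the statement really is a soft consequence of the way $s$ and $\sssm$ are coupled in \eqref{sands}.

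Concretely, with the convention $\sssm_{(-1,\cdot)}:=0$ already in force, I would introduce
$$
g_{m}\okr \sssm_{(m,\eesm_{0})}+m\,\sssm_{(m-1,\eesm_{1})}\in\ddc\sbs\hhc,\quad m\in\nnb,
$$
and compute $\is{g_{n}}{g_{m}}$ by expanding through sesquilinearity and applying \eqref{2.03.10} to each of the four resulting terms. These four terms reproduce, one for one, the four summands on the right-hand side of \eqref{sands}, yielding $\is{g_{n}}{g_{m}}=s(m,n)$. The one point worth flagging is that the kernel sections $\sssm_{(k,\bbsm)}$ are antilinear in $\bbsm$, so one must keep careful track of where conjugations land when pulling scalars out of $\is{\cdot}{\cdot}$; however, since the coefficients $m,n$ in the definition of $g_{m}$ are real, no conjugations survive and the match with \eqref{sands} is exact.

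Finally, given a finitely supported $(\lambda_{k})_{k}$, I would set $v\okr\sum_{n}\sbar\lambda_{n}g_{n}\in\ddc$. Expanding $\is{v}{v}$ by linearity in the first slot and antilinearity in the second gives
$$
\|v\|^{2}=\sum_{m,n}\lambda_{m}\sbar\lambda_{n}\is{g_{n}}{g_{m}}=\sum_{m,n}\lambda_{m}\sbar\lambda_{n}\,s(m,n)\Ge 0,
$$
which is precisely \eqref{1.28.09}. The main obstacle is purely notational, namely keeping the sesquilinearity conventions of \eqref{2.03.10} straight against those of the form $\sssm$; there is no conceptual content beyond the RKHS construction already in place. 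As a variant, if one prefers to cite heavier machinery, the Jordan dilation \eqref{1.19.10} furnished by Theorem \ref{t2.27.07} rewrites $\sum\lambda_{m}\sbar\lambda_{n}s(m,n)$ directly as $\|\sum_{m}\lambda_{m}(T+N)^{m}V(1,0)\|_{\kkc}^{2}$, yielding the same conclusion.
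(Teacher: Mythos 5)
Your argument is correct and is essentially the paper's own proof in reproducing--kernel clothing: since $\sssm_{(n,\bbsm)}$ is antilinear in $\bbsm$, your vector $v=\sum_{n}\sbar\lambda{}_{n}g_{n}$ equals $\sum_{n}\sssm_{(n,\aasm_{n})}$ with $\aasm_{n}=\lambda_{n}\eesm_{0}+(n+1)\lambda_{n+1}\eesm_{1}$, and the inequality $\|v\|^{2}\Ge0$ is precisely the instance of \eqref{2.27.07} that the paper reaches by reindexing the four sums directly at the level of the form, without ever invoking the space $\hhc$. The only difference is presentational, and your observation that translation invariance plays no role is consistent with the paper's computation.
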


\begin{proof}
Note that
\begin{eqnarray*}
\sum\nolimits_{m,n}\lambda_{m}\sbar \lambda_{n}s(m,n)&=&
\sum\nolimits_{m,n}\lambda_{m}\sbar \lambda_{n}\sssm(m,n,\eesm_0,\eesm_0)\\
&+&\sum\nolimits_{m,n}\lambda_{m}\sbar \lambda_{n}m\sssm(m-1,n,\eesm_1,\eesm_0)\\
&+&\sum\nolimits_{m,n}\lambda_{m}\sbar \lambda_{n}n\sssm(m,n-1,\eesm_0,\eesm_1)\\
&+&\sum\nolimits_{m,n}\lambda_{m}\sbar \lambda_{n}mn\sssm(m-1,n-1,\eesm_1,\eesm_1)\\
&=& \sum\nolimits_{m,n}\sssm(m,n,\lambda_{m}\eesm_0+(m+1)\lambda_{m+1}\eesm_1, \lambda_{n}\eesm_0)\\
&+&\sum\nolimits_{m,n}\sssm(m,n,\lambda_{m}\eesm_0+(m+1)\lambda_{m+1}\eesm_1,(n+1)\lambda_{n+1}\eesm_1)\\
&=& \sum\nolimits_{m,n}\sssm(m,n,\lambda_{n}\eesm_0+(n+1)\lambda_{n+1}\eesm_1,\lambda_{m}\eesm_0+(m+1)\lambda_{m+1}\eesm_1)\\
&\geq& 0,
\end{eqnarray*}
where the last inequality is a consequence of positive definiteness of the form $\sssm$. 
\end{proof}

Now a question appears. Does positive definiteness of $s$ together with the Helton condition  \eqref{phiD} imply existence of {\em positive definite} form $\sssm$ satisfying \eqref{sands}? In the affirmative case this would give an answer to the Sobolev moment problem purely in terms of the original data $s$: a sequence $s$ is a Sobolev moment sequence if and only if it is positive definite and satisfies \eqref{phiD}.

   \bibliographystyle{amsplain}  
   
   \end{document}